\newtheorem*{theorem*}{Theorem}
\newtheorem*{proposition*}{Proposition}
\newtheorem*{lemma*}{Lemma}
\newtheorem*{corollary*}{Corollary}
\newtheorem*{problem*}{Problem}
\newtheorem*{claim*}{Claim}
\newtheorem*{mainresult*}{Main Result}
\theoremstyle{definition}
\newtheorem*{construction*}{Construction}
\newtheorem*{example*}{Example}
\newtheorem*{observation*}{Observation}
\theoremstyle{remark}
\newtheorem*{remark*}{Remark}
\sloppy\pagestyle{plain}
\makeatletter\@addtoreset{equation}{section} \makeatother
\def\P{\mathbb{P}}
\author{Ivan Cheltsov, Igor Krylov, Jesus Martinez-Garcia, Evgeny Shinder}
\thanks{Throughout this paper, all varieties are assumed to be projective and defined over~$\mathbb{C}$.}
\title{On maximally non-factorial nodal Fano threefolds}
\begin{document}

\begin{abstract}
We classify non-factorial nodal Fano threefolds with $1$ node and class group of rank $2$.
\end{abstract}

\subjclass[2010]{14J45.}	

\address{ \emph{Ivan Cheltsov}\newline
\textnormal{University of Edinburgh, Edinburgh, Scotland}\newline i.cheltsov@ed.ac.uk}

\address{\emph{Igor Krylov}
\newline
\textnormal{Institute for Basic Science, Pohang, Korea}\newline	Igor@Krylov.su}

\address{ \emph{Jesus Martinez-Garcia}
\newline
\textnormal{University of Essex, Colchester, England}\newline jesus.martinez-garcia@essex.ac.uk}

\address{ \emph{Evgeny Shinder}
\newline
\textnormal{University of Sheffield, Sheffield, England}
\newline
\textnormal{University of Bonn, Bonn, Germany}
\newline eugene.shinder@gmail.com}

\maketitle

Let $X$ be a
Fano threefold that has at worst isolated ordinary double points (nodes).
Then~both the Picard group $\mathrm{Pic}(X)$ and the class group $\mathrm{Cl}(X)$ are torsion-free of finite rank,
and $\mathrm{rk}\,\mathrm{Cl}(X)-\mathrm{rk}\,\mathrm{Pic}(X)$ is known as the~\emph{defect} of the~threefold $X$ \cite{Clemens,Cynk,CynkRams,Kaloghiros}.
If the~defect is zero, we say that $X$ is \emph{factorial} \cite{Cheltsov2009,Cheltsov2010}.
Factoriality imposes significant  constraints on the~geometry of the~Fano threefold \cite{CheltsovPark2010,CheltsovPrzyjalkowskiShramov2019,Mella,Shramov}.

It is well known that the~defect of  $X$ does not exceed the~number of its singular points (see e.g. \cite[Corollary 3.8]{KalckPavicShinder}).
If
$$
\mathrm{rk}\,\mathrm{Cl}(X)-\mathrm{rk}\,\mathrm{Pic}(X)=|\mathrm{Sing}(X)|,
$$
then we say that $X$ is 
\emph{maximally non-factorial}.
This property is also called $\mathbb{Q}$-maximal non-factoriality; 
see 
\cite[Proposition 6.13]{KuznetsovShinder}
and
\cite[Proposition A.14]{KuznetsovShinder2}
for various ways to define it for a nodal Fano threefold $X$.
By definition, if $X$ has a single node, then $X$ is maximally non-factorial if and only if it is non-factorial.
Let us give the simplest~example of a non-factorial threefold with one node.

\begin{example*}
Let $X$ be the quadric cone in $\mathbb{P}^4$ with one node.
Then $X$ is a maximally non-factorial nodal Fano threefold.
Let $\eta\colon\widetilde{X}\to X$ be the~blow up at the~singular point of the~threefold~$X$,
and let $E$ be the~$\eta$-exceptional surface.
Then $E\cong\mathbb{P}^1\times\mathbb{P}^1$ and $E\vert_{E}\cong\mathcal{O}_{E}(-1,-1)$,
and there exists the~following commutative diagram:
$$
\xymatrix{
&&\widetilde{X}\ar[rd]^{\varphi_2}\ar[ld]_{\varphi_1}\ar[dd]_{\eta}&&\\
&X_1\ar[ld]_{\pi_1}\ar[rd]^{\phi_1}&&X_2\ar[ld]_{\phi_2}\ar[rd]^{\pi_2}&\\
\mathbb{P}^1&&X&&\mathbb{P}^1}
$$
where $\varphi_1$ and $\varphi_2$ are contractions of the~surface $E$ to curves such that $\varphi_2\circ\varphi_1^{-1}$ is an Atiyah flop,
both $\phi_1$ and $\phi_2$ are small projective resolutions, and both $\pi_1$ and $\pi_2$ are $\mathbb{P}^2$-bundles.
\end{example*}

While maximally non-factorial nodal Fano threefolds are important in birational geometry  because of their rich geometry, 
they also play a central
role in the recent
study of the derived categories of coherent sheaves for singular varieties
(this in turn allows the study of the birational geometry with a very different toolset, such as stability conditions). 
Indeed, 
maximally non-factorial nodal Fano threefolds are very special from the~perspective of derived categories of coherent sheaves,
in particular their~derived categories can often be separated into a smooth proper part and a singular part \cite{KalckPavicShinder,PavicShinder,Xie,KuznetsovShinder,KuznetsovShinder2}. 

Let us explain the connection to derived categories in some more
detail.
In \cite{KalckPavicShinder, PavicShinder, Xie} the authors, inspired by the work of Kawamata \cite{Kawamata},
introduced and studied maximal non-factoriality 
of del Pezzo threefolds (a subset consisting of 8 of the 105 families of Fano threefolds). They proved that a del Pezzo threefold is maximally non-factorial if and only if its derived category admits a Kawamata semiorthogonal 
decomposition, that is an admissible semiorthogonal decomposition into a perfect part and derived categories of singular finite-dimensional algebras. 
It is thus natural to ask whether being a maximally non-factorial Fano threefold is a sufficient condition for  the existence of a Kawamata decomposition.
 The proof of \cite{PavicShinder} relied on the classification of del Pezzo threefolds which are maximally non-factorial. Thus, in order to study Kawamata decompositions it is natural to have a 
 classification of maximally non-factorial Fano threefolds.
 
A slightly weaker notion of categorical absorption of singularities was introduced in \cite{KuznetsovShinder, KuznetsovShinder2}. 
By \cite[Corollary 6.17]{KuznetsovShinder},
every maximally non-factorial Fano threefold with one ordinary double point admits a categorical absorption of singularities;
the converse is also true, and holds for any number of nodes \cite[Proposition 6.12]{KuznetsovShinder}.
A highlight of this theory 
in \cite{KuznetsovShinder2} is the deformation between
the main components of the derived categories of
one-nodal prime Fano threefolds of genus $2d+2$
and del Pezzo threefolds of rank one and degree $d$,
for $d \in \{1, 2, 3, 4, 5\}$, which solves
the so-called
Fano threefold conjecture of Kuznetsov.

There is also a consequence of maximal
non-factoriality to intermediate Jacobians.
Namely, in some sense, a maximally non-factorial nodal Fano threefold $X$ 
has
a smooth projective 
intermediate Jacobian, so that  singularities of $X$
can be ignored from the Hodge theory perspective.
The precise statement \cite[Proposition A.16]{KuznetsovShinder2}
is that a family of smooth Fano threefolds degenerating to
a 1-nodal
maximally non-factorial Fano threefold 
has a smooth projective
family of
intermediate Jacobians, i.e. no
actual degeneration takes place in the middle degree cohomology.

On the other hand,
maximally non-factorial Fano threefolds are rather rare among all nodal Fano threefolds. 
Motivated by the recent
advances in derived categories of singular Fano threefolds, we pose the~following problem.

\begin{problem*}
Classify all maximally non-factorial nodal Fano threefolds.
\end{problem*}

The goal of this paper is to partially solve this problem.
Namely, we aim to classify maximally non-factorial nodal Fano threefolds of Picard rank one that have exactly one singular point (node). This case is particularly well behaved from the viewpoint of birational geometry, see the chain of equivalences in \cite[Proposition~6.13]{KuznetsovShinder} that applies only when one singular point is present.

Now, we are ready to present the~main result of this paper.
To do this, we suppose  that
\begin{itemize}
\item the~nodal Fano threefold $X$ has one node,
\item the~rank of the~Picard group $\mathrm{Pic}(X)$ is one,
\item the~rank of the~class group $\mathrm{Cl}(X)$ is two.
\end{itemize}
Let $\eta\colon\widetilde{X}\to X$ be the~blow up of the~node of the~threefold $X$, let $E$ be the~$\eta$-exceptional surface.
Then $\widetilde{X}$ is smooth, $E\cong\mathbb{P}^1\times\mathbb{P}^1$, $E\vert_{E} \simeq \mathcal{O}_{E}(-1,-1)$,
and it follows from \cite{Corti} that $X$ uniquely determines the~following Sarkisov link:
\begin{equation}
\label{equation:link}\tag{$\bigstar$}
\xymatrix{
&&\widetilde{X}\ar[rd]^{\varphi_2}\ar[ld]_{\varphi_1}\ar[dd]_{\eta}&&\\
&X_1\ar[ld]_{\pi_1}\ar[rd]^{\phi_1}&&X_2\ar[ld]_{\phi_2}\ar[rd]^{\pi_2}&\\
Z_1&&X&&Z_2}
\end{equation}
where $\varphi_1$ and $\varphi_2$ are contractions of the~surface $E$ to curves such that $\varphi_2\circ\varphi_1^{-1}$ is an Atiyah flop,
both $\phi_1$ and $\phi_2$ are small projective resolutions, and both $\pi_1$ and $\pi_2$ are extremal contractions \cite{Mori}.
Note that $-K_{X_1}\sim\phi_1^*(-K_{X})$ and  $-K_{X_2}\sim\phi_2^*(-K_{X})$, so that
$$
-K_{X_1}^3=-K_{X_2}^3=-K_{X}^3.
$$

It follows from \cite{Namikawa,JahnkeRadloff2011} that $X$ admits a smoothing $X\rightsquigarrow X_s$, where $X_s$ is a smooth Fano~threefold,
$-K_X^3=-K_{X_s}^3$, and the~rank of the~Picard group $\mathrm{Pic}(X_s)$ is $1$.
We also know from \cite{Clemens} that
\begin{equation}
\label{equation:h-1-2}\tag{$\maltese$}
h^{1,2}(\widetilde X)=h^{1,2}(X_1)=h^{1,2}(X_2)=h^{1,2}(X_s),
\end{equation}
which imposes a significant constraint on the~link \eqref{equation:link}.
We set 
$$
d=-K_X^3, \quad h^{1,2}=h^{1,2}(X_s),
$$ 
and
$$
I=\max\big\{n\in\mathbb{Z}_{>0}\ \text{such that}\ -K_{X_{s}}\sim nH\ \text{for}\ H\in\mathrm{Pic}(X_s)\big\}.
$$
Then $I$ is the~\emph{index} of the~Fano threefold $X_s$, which is also the~index of the~Fano threefold $X$ \cite{JahnkeRadloff2011}.

In the~remaining part of this paper, we prove the~following theorem.

\begin{theorem*}
There are exactly 17 types of  non-factorial
Fano threefolds of Picard rank one with one node.
All possibilities for \eqref{equation:link}, up to swapping the left and right sides of the diagram, are described in the~table at the~end of the~paper.
\end{theorem*}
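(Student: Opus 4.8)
The plan is to read off all available information from the Sarkisov link~\eqref{equation:link} by running the two-ray game on the weak Fano threefolds $X_1$ and $X_2$, and then to cross-check the numerical output against the finite list of smooth Fano threefolds that can occur as the smoothing $X_s$. First I would record the constraints coming from the smoothing: since $X_s$ is a smooth Fano threefold of Picard rank one, it lies in one of the $17$ deformation families in the Iskovskikh classification, and for each of them the triple $(d,I,h^{1,2})$ is known. This pins down a finite list of admissible values of $(d,I,h^{1,2})$ that the link must reproduce.

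Next comes the geometric analysis. Each $X_i$ is smooth of Picard rank two with $-K_{X_i}=\phi_i^*(-K_X)$ nef and big, so $\overline{NE}(X_i)$ has exactly two extremal rays: the $K_{X_i}$-trivial ray contracted by the small map $\phi_i$ (the flopping curve), and a $K_{X_i}$-negative ray contracted by $\pi_i$. By Mori's classification of extremal contractions of smooth threefolds~\cite{Mori}, $\pi_i$ is either a divisorial contraction, in which case $Z_i$ is a Fano threefold of Picard rank one (possibly with mild singularities), or a conic bundle, in which case $Z_i$ is a smooth surface, or a del Pezzo fibration, in which case $Z_i\cong\mathbb{P}^1$. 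I would enumerate the allowed types of $\pi_i$ subject to $-K_{X_i}$ being nef and big, and to $-K_{X_1}^3=-K_{X_2}^3=d$ together with the prescribed index $I$.

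The third ingredient is the cohomological rigidity. Because $\varphi_i\colon\widetilde{X}\to X_i$ is the blow up of the rational flopping curve, blowing up a curve of genus $0$ leaves $h^{1,2}$ unchanged, so $h^{1,2}(\widetilde{X})=h^{1,2}(X_i)$ and~\eqref{equation:h-1-2} reads $h^{1,2}(X_1)=h^{1,2}(X_2)=h^{1,2}$. For each type of $\pi_i$ I would express $h^{1,2}(X_i)$ through the invariants of $Z_i$ — the genus of a blown-up curve, the discriminant curve of a conic bundle, or the generic fibre of a del Pezzo fibration — and combine these formulas with the matching of $d$ and $I$. Comparing against the admissible list from the first step should leave only finitely many numerical candidates for the pair $(\pi_1,\pi_2)$. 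For each surviving candidate I would then exhibit an explicit $X$ realising it, collecting the outcomes up to swapping the two sides of~\eqref{equation:link} to obtain the $17$ types in the table.

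The hard part will be completeness together with the cohomological bookkeeping: one must rule out the many configurations that are numerically admissible but geometrically impossible, show in each admissible case that the Atiyah flop linking $X_1$ and $X_2$ genuinely exists and descends to a one-nodal $X$ of Picard rank one, and confirm that the rigidity~\eqref{equation:h-1-2} is sharp enough to pin down the list rather than merely necessary. Producing an explicit example for every surviving type, and checking that the two outer contractions $\pi_1$ and $\pi_2$ are compatible across the flop, is where I expect most of the work to lie.
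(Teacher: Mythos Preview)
Your proposal is correct and follows essentially the same strategy as the paper: classify the extremal contractions $\pi_i$ via Mori's theorem, constrain them by the triple $(d,I,h^{1,2})$ of the smoothing, and match the two sides across the flop. The paper's execution differs mainly in that it outsources the del Pezzo fibration and birational--birational cases almost entirely to existing classification tables (Takeuchi, Fukuoka, Cutrone--Marshburn), and for the conic bundle cases it sets up and solves an explicit Diophantine system~\eqref{equation:Takeuchi} in intersection numbers on $X_1$ --- a tool you do not name but will need, since matching $h^{1,2}$ and $d$ alone is not sharp enough to cut the list down.
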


Each Sarkisov link in the~table exists and can be described explicitly.
For the reader's convenience, we provide the relevant references in the~table.
For the particular case of $-K_X^3=22$, the result
is proved in \cite{Prokhorov16}. 
A. Kuznetsov and Y. Prokhorov have independently obtained the same classification of non-factorial Fano threefolds of Picard rank one  \cite{KuznetsovProkhorov-Fano}.

\begin{remark*}
It should be pointed out that it follows from our classification that one-nodal maximally non-factorial
degenerations of smooth Fano threefolds of Picard rank one (if any) have the same rationality as their smoothing
(in the cases $\mathbf{2}$ and $\mathbf{7}$ in the table we need to assume that the smooth Fano threefolds are general).
Indeed, this can be verified case by case, using the rationality results from
\cite{Beauville,CheltsovShramovPrzyjalkowski2005,ClemensGriffiths,Grinenko,Prokhorov2018,Voisin}.
\end{remark*}

\begin{observation*}
If $X$ is a del Pezzo threefold ($I=2$) of Picard rank one with 
$-K_X^3\leqslant 32$,
then the~nodal Fano threefold $X$ is never maximally non-factorial.
This follows from from the defect computation
\cite{Cynk, CynkRams}, see
\cite[Corollary 2.5]{PavicShinder}.
Therefore, the only options for $X$ when $I>1$ are these two Fano threefolds:
\begin{itemize}
\item the nodal quadric threefold in $\mathbb{P}^4$ ($I=3$, $-K_X^3=54$, the~Sarkisov link $\mathbf{17}$);
\item a quintic del Pezzo threefold ($I=2$, $-K_X^3=40$, the~Sarkisov link $\mathbf{16}$).
\end{itemize}
\end{observation*}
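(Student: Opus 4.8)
The plan is to reduce the Observation to a defect computation and then invoke the ``one point imposes one condition'' principle. Since $X$ has a single node, it is maximally non-factorial precisely when it is non-factorial, i.e. when its defect $\mathrm{rk}\,\mathrm{Cl}(X)-\mathrm{rk}\,\mathrm{Pic}(X)$ equals $1$; as the defect never exceeds the number of nodes, here it is $0$ or $1$, and factoriality is equivalent to defect $0$. Writing $-K_X=2H$, the hypothesis $-K_X^3\leqslant 32$ reads $H^3\leqslant 4$, so it suffices to prove factoriality for the one-nodal del Pezzo threefolds of Picard rank one in the four families $H^3\in\{1,2,3,4\}$. I would treat them through their standard projective models: a nodal cubic in $\mathbb{P}^4$ ($H^3=3$), a nodal intersection of two quadrics in $\mathbb{P}^5$ ($H^3=4$), a double cover of $\mathbb{P}^3$ branched in a nodal quartic surface ($H^3=2$), and a nodal degree-$6$ hypersurface in $\mathbb{P}(1,1,1,2,3)$ ($H^3=1$).

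For the two complete-intersection models I would apply the Cynk--Rams defect computation: for a nodal complete intersection threefold $V(f_1,\dots,f_c)\subset\mathbb{P}^n$ with $n=c+3$ and singular set $\Sigma$, the defect equals $\#\Sigma$ minus the number of independent conditions that $\Sigma$ imposes on the adjoint linear system $|\mathcal{O}_{\mathbb{P}^n}(T)|$, where $T=2\sum_i d_i-(n+1)$ is the degree in which $H^{2,1}_{\mathrm{prim}}$ of a smooth member lives. For the cubic $T=2\cdot 3-5=1$ and for the two quadrics $T=2\cdot 4-6=2$, so in both cases $T\geqslant 0$ and the adjoint system is non-empty. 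A single point imposes exactly one condition on any non-empty linear system not all of whose members vanish at it; hence with $\#\Sigma=1$ the number of conditions is $1$ and the defect is $0$. This settles $H^3\in\{3,4\}$.

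For $H^3=2$ I would run the same argument on the branch surface: a node of $X$ lies over a node of the quartic $B\subset\mathbb{P}^3$, and the $\pm 1$-eigenspace decomposition of $H^3$ of the cyclic cover identifies the defect of $X$ with the failure of the node of $B$ to impose a condition on $|\mathcal{O}_{\mathbb{P}^3}(2\deg B-4)|=|\mathcal{O}_{\mathbb{P}^3}(4)|$, so again one node gives defect $0$. The family $H^3=1$ is the step I expect to be most delicate, since the weighted hypersurface in $\mathbb{P}(1,1,1,2,3)$ (equivalently a double cover of $\mathbb{P}(1,1,1,2)$) is not covered by the plain formula and requires the weighted-projective version of the defect computation, with a careful check that the relevant adjoint system has non-negative degree and that the node imposes its one condition there. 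It is illuminating to contrast all of this with the nodal quadric $Q\subset\mathbb{P}^4$ of the Example: there $T=2\cdot 2-5=-1<0$, the adjoint system is empty, the node imposes no condition, and the formula correctly returns defect $\#\Sigma=1$, i.e. non-factoriality. This sign change at low degree is exactly what makes the bound $-K_X^3\leqslant 32$ sharp.

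Finally, for the concluding dichotomy I would enumerate the Fano threefolds of Picard rank one with $I>1$. By the Kobayashi--Ochiai bound and its extension to nodal Gorenstein Fano threefolds, $I=4$ forces $X\cong\mathbb{P}^3$, which is smooth and hence contributes no nodal member, while $I=3$ forces $X$ to be a (possibly nodal) quadric in $\mathbb{P}^4$; the case $I=2$ gives the del Pezzo threefolds of degree $H^3\in\{1,\dots,5\}$. The factoriality established above removes the del Pezzo threefolds with $H^3\leqslant 4$, leaving exactly the quintic del Pezzo threefold ($-K_X^3=40$, link $\mathbf{16}$) and the nodal quadric ($-K_X^3=54$, link $\mathbf{17}$, non-factorial by the Example), as asserted.
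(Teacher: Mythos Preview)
Your proposal is correct and follows the same route as the paper, which gives no argument beyond citing the defect computation of Cynk and Cynk--Rams together with \cite[Corollary 2.5]{PavicShinder}; you have simply unpacked what those references say case by case. The only place where your write-up remains a sketch is $H^3=1$, but that is precisely the content absorbed into the cited \cite[Corollary 2.5]{PavicShinder}, so you are not missing any idea the paper supplies.
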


We prove the theorem by analyzing the possible links \eqref{equation:link} in the following order:
\begin{enumerate}
\item $\pi_1$ is a del Pezzo fibration, and $\pi_2$ is arbitrary;
\item both $\pi_1$ and $\pi_2$ are birational;
\item $\pi_1$ is a conic bundle and $\pi_2$ is arbitrary.
\end{enumerate}
These cover all possible  Mori fiber spaces arising in~\eqref{equation:link}, up to swapping $\pi_1$ and $\pi_2$.

Note that all possibilities for the smooth Fano variety $X_s$ are known and can be found in \cite{IskovskikhProkhorov}.
Using this classification, we list the~possible values of $h^{1,2}$ as follows.

\begin{center}
\renewcommand\arraystretch{1.4}
\begin{tabular}{|c||c|c|c|c|c|c|c|c|c|c|}
  \hline
$(d,I)$  & $(2,1)$ & $(4,1)$ & $(6,1)$ & $(8,1)$ & $(10,1)$ & $(12,1)$ & $(14,1)$, & $(16,1)$ & $(18,1)$ & $(22,1)$\\
  \hline
$h^{1,2}$  & $52$ & $30$ & $20$ & $14$ & $10$ & $7$ & $5$ & $3$ & $2$ & $0$\\
  \hline
\end{tabular}
\end{center}

\begin{center}
\renewcommand\arraystretch{1.4}
\begin{tabular}{|c||c|c|c|c|c|c|c|c|c|c|c|c|c|c|c|c|c|}
  \hline
$(d,I)$  & $(8,2)$ & $(16,2)$ & $(24,2)$ & $(32,2)$ & $(40,2)$ & $(54,3)$ & $(64,4)$\\
  \hline
$h^{1,2}$ & $21$ & $10$ & $5$ & $2$ & $0$ & $0$ & $0$\\
  \hline
\end{tabular}
\end{center}

Possibilities for \eqref{equation:link} are studied in \cite{ArapCutroneMarshburn,BlancLamy,CutroneLimarziMarshburn,CutroneMarshburn,Fukuoka2017,Fukuoka2019,JahnkePeternell,JahnkePeternellRadloff2005,JahnkePeternellRadloff2011,JahnkeRadloff2006,JahnkeRadloff2011,Kaloghiros2012,Prokhorov16,Prokhorov2019,Prokhorov2023,Takeuchi1989,Takeuchi2022,Yasutake}.
Using some of these results, we immediately obtain the~following corollary.

\begin{corollary*}
Suppose that $\pi_1$ is a fibration into del Pezzo surfaces.
Then \eqref{equation:link} is one of the~links
\begin{center}
$\mathbf{1}$,~$\mathbf{2}$,~$\mathbf{3}$, $\mathbf{4}$, $\mathbf{5}$, $\mathbf{6}$,
$\mathbf{8}$, $\mathbf{9}$, $\mathbf{10}$, $\mathbf{12}$,
$\mathbf{15}$, $\mathbf{16}$, $\mathbf{17}$
\end{center}
in the~table in the~end of the~paper.
\end{corollary*}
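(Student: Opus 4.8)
The plan is to reduce the statement to a finite check against the two numerical tables and the existing classification of del Pezzo fibrations, and then read off the admissible links. First I would fix the fibration data. A fibration into del Pezzo surfaces has a one-dimensional base, so $Z_1$ is a smooth projective curve; since $X$ is Fano we have $h^1(\mathcal{O}_X)=0$, and the small resolution $\phi_1$ preserves this, so $q(X_1)=0$ and hence $Z_1\cong\mathbb{P}^1$. The small contraction $\phi_1$ also identifies $\mathrm{Pic}(X_1)\cong\mathrm{Cl}(X)$, so $X_1$ is a smooth threefold with $\rho(X_1)=2$ whose two extremal rays are the flopping contraction $\phi_1$ (contracting the single $(-1,-1)$-curve lying over the node) and the del Pezzo fibration $\pi_1$. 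I then record the three invariants attached to $\pi_1$: the degree $\delta=K_F^2\in\{1,\dots,9\}$ of a general fibre $F$, the anticanonical degree $-K_{X_1}^3=-K_X^3=d$, and, via \eqref{equation:h-1-2}, the Hodge number $h^{1,2}(X_1)=h^{1,2}(X_s)=h^{1,2}$.

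Second, I would impose the numerical constraints. The pair $(d,h^{1,2})$ must appear in one of the two tables above, which simultaneously fixes the index $I$ and, by the Observation, forces $I=1$ except for the two del Pezzo/quadric cases producing links $\mathbf{16}$ and $\mathbf{17}$. Standard intersection theory on the del Pezzo fibration (computing $-K_{X_1}^3$ in terms of $\delta$ and the discrete invariants of $\pi_1$) then bounds the possible degrees $\delta$ for each admissible $(d,I)$. For each surviving triple $(d,I,\delta)$ I would appeal to the classification of del Pezzo fibrations of Picard rank two that carry a second, flopping extremal ray and arise in a Sarkisov link of the form \eqref{equation:link}; this is exactly the content of the works cited above, in particular \cite{Fukuoka2017,Fukuoka2019,JahnkePeternellRadloff2011,Takeuchi1989}, which also describe the partner contraction $\pi_2$, the base $Z_2$, and the resulting one-nodal Fano threefold $X$.

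Finally, I would match each configuration produced in this way with the table of $17$ links, identifying its number and discarding any candidate whose smoothing fails to have Picard rank one or whose Hodge number violates \eqref{equation:h-1-2}. The main obstacle is precisely this last enumeration: the crude numerical filter on $(d,I,\delta)$ does not by itself single out a del Pezzo fibration, so for every admissible value one must invoke the finer classification to decide which fibrations genuinely occur, to confirm that the Atiyah flop exists and that contracting the flopping curve yields a Fano threefold with exactly one node and the prescribed $(d,I)$, and only then to conclude that the link is one of $\mathbf{1}$, $\mathbf{2}$, $\mathbf{3}$, $\mathbf{4}$, $\mathbf{5}$, $\mathbf{6}$, $\mathbf{8}$, $\mathbf{9}$, $\mathbf{10}$, $\mathbf{12}$, $\mathbf{15}$, $\mathbf{16}$, $\mathbf{17}$.
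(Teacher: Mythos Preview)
Your plan is sound in spirit but considerably more elaborate than what the paper actually does, and it leans on the wrong reference at the decisive step. The paper's proof is two lines: for del Pezzo fibrations of degree~$6$ it invokes \cite{Fukuoka2017,Fukuoka2019}, which yields link~$\mathbf{15}$; for every other degree it invokes \cite{Takeuchi2022}, whose tables already contain the complete enumeration of weak Fano threefolds of Picard rank two with a del Pezzo fibration and a flopping second ray, together with the partner contraction~$\pi_2$. No preliminary numerical filtering on $(d,I,\delta)$, no appeal to the Observation, and no separate verification of the flop or the nodal anticanonical model is needed, because all of this is packaged in Takeuchi's 2022 classification.

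The gap in your write-up is the citation: you single out \cite{Takeuchi1989} and \cite{JahnkePeternellRadloff2011} as the sources that ``describe the partner contraction $\pi_2$, the base $Z_2$, and the resulting one-nodal Fano threefold $X$''. The 1989 paper treats Sarkisov-type links starting from smooth Fano threefolds, not the classification of del Pezzo fibrations with a flopping ray, and \cite[Proposition~6.5]{JahnkePeternellRadloff2011} only covers a handful of the cases (essentially links $\mathbf{5}$, $\mathbf{6}$, $\mathbf{8}$, $\mathbf{10}$, $\mathbf{15}$). Links $\mathbf{1}$--$\mathbf{4}$, $\mathbf{9}$, $\mathbf{12}$, $\mathbf{16}$ are established in \cite{Takeuchi2022} (see the specific table entries (2.5.2), (2.7.3), (2.9.3--4), (2.11.2), (2.13.1), (2.3.2), etc.), which you do not cite. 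Without that reference your final enumeration step has no support for roughly half of the links, and the ``crude numerical filter'' you describe cannot close the gap on its own---as you yourself note in the last paragraph. Replace \cite{Takeuchi1989} by \cite{Takeuchi2022} and drop the preliminary filtering, and your argument collapses to the paper's.
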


\begin{proof}
If $\pi_1$ is a fibration into del Pezzo surfaces of degree $6$,
the assertion follows from \cite{Fukuoka2017,Fukuoka2019}, in which case we get the link $\mathbf{15}$.
In the~remaining cases, the~required assertion follows from \cite{Takeuchi2022}.
\end{proof}

Therefore, we may assume that neither $\pi_1$ nor $\pi_2$ is a fibration into del Pezzo surfaces.

\begin{proposition*}
Suppose that $\pi_1$ and $\pi_2$ are birational. Then \eqref{equation:link} is the~link $\mathbf{13}$~in~the~table.
\end{proposition*}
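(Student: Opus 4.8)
The plan is to pin down the link \eqref{equation:link} from the three numerical invariants $d$, $h^{1,2}$ and $I$, using the hypothesis that both $\pi_1$ and $\pi_2$ are birational. Since $X_1$ and $X_2$ are smooth weak Fano threefolds of Picard rank $2$ (small resolutions of the one-nodal $X$, with $-K_{X_i}\sim\phi_i^*(-K_X)$ nef and big), and since each $\pi_i$ contracts the extremal ray distinct from the flopping ray of $\phi_i$, Mori's classification \cite{Mori} shows that every $\pi_i$ is a divisorial contraction onto a $\mathbb{Q}$-factorial terminal Fano threefold $Z_i$ with $\mathrm{rk}\,\mathrm{Pic}(Z_i)=1$. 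The first step is therefore to record, for each Mori type, how $\pi_i$ changes the two invariants constrained in \eqref{equation:link}: the anticanonical degree $-K^3$ and the Hodge number $h^{1,2}$.

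First I would reduce the index. By the Observation, the only maximally non-factorial threefolds with $I>1$ are the links $\mathbf{17}$ and $\mathbf{16}$, both of which carry a del Pezzo fibration and hence fall under the Corollary; as we have already excluded del Pezzo fibrations, we may assume $I=1$, so that $X_s$ is a Fano threefold of genus $g$ with $d=2g-2\in\{2,4,\dots,22\}$ and $h^{1,2}$ as in the first table. Next I would show that each $\pi_i$ contracts its exceptional divisor to a \emph{curve} rather than to a point: the point contractions (Mori types $\mathrm{E2}$--$\mathrm{E5}$) leave $h^{1,2}$ unchanged while shifting $-K^3$ by a type-dependent constant, and one checks that no such pair $(-K^3,h^{1,2})$ matches a row of the first table once $Z_i$ is drawn from the list of Picard-rank-one Fano threefolds in \cite{IskovskikhProkhorov}. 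Thus each $\pi_i$ is the blow-up of a smooth curve $C_i\subset Z_i$, and I would exploit the two standard formulas
$$
h^{1,2}(X_i)=h^{1,2}(Z_i)+g(C_i),\qquad -K_{X_i}^3=-K_{Z_i}^3-2\big(-K_{Z_i}\cdot C_i\big)+2g(C_i)-2,
$$
together with the constraints \eqref{equation:h-1-2} and $-K_{X_1}^3=-K_{X_2}^3=d$ coming from \cite{Clemens} and from the Sarkisov link.

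Running $Z_i$ over the finite list in \cite{IskovskikhProkhorov} and solving these two equations for each row $(d,h^{1,2})$ of the first table produces a finite list of numerical candidates $(Z_i,C_i)$ on each side of \eqref{equation:link}. The expected surviving solution is $(d,h^{1,2})=(14,5)$ (genus $8$), in which both $Z_1$ and $Z_2$ are the smooth cubic threefold and $C_1,C_2$ are conics: here $h^{1,2}(Z_i)=5$, $g(C_i)=0$ and $-K_{Z_i}\cdot C_i=4$, so both formulas are satisfied, this matches link $\mathbf{13}$, and the picture is consistent with the classical birationality between the genus-$8$ Fano threefold and the cubic threefold and with the unconditional irrationality recorded in the Remark via \cite{ClemensGriffiths}.

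The main obstacle is that these numerical constraints alone do not single out one case: already for $(d,h^{1,2})=(14,5)$ several curves $C_i$ on different threefolds $Z_i$ satisfy both equations, and other rows of the table admit formal solutions as well (for instance a line on a degree-$2$ del Pezzo threefold gives a formal candidate with $(d,h^{1,2})=(10,10)$). Eliminating the spurious candidates requires checking that the chosen $(Z_i,C_i)$ actually yields a weak Fano $X_i$ whose two-ray game consists of a single Atiyah flop followed by a divisorial contraction on \emph{both} sides --- equivalently, that gluing the two blow-ups along the flop reproduces $E\cong\mathbb{P}^1\times\mathbb{P}^1$ with $E\vert_E\cong\mathcal{O}_E(-1,-1)$ and a genuinely one-nodal $X$. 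This is precisely the content of the classification of two-ray links of weak Fano threefolds of Picard rank two in \cite{Takeuchi2022,CutroneMarshburn}, which I would invoke both to discard the remaining candidates and to establish existence and uniqueness, concluding that \eqref{equation:link} is the link $\mathbf{13}$.
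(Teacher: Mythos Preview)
Your proposal contains a concrete error: you have misidentified link~$\mathbf{13}$. In the table, link~$\mathbf{13}$ has $d=22$, $h^{1,2}=0$, $Z_1=Z_2=\mathbb{P}^3$, and each $\pi_i$ is the blow up of a smooth rational quintic curve not lying on a quadric. Your claimed target --- $(d,h^{1,2})=(14,5)$ with $Z_1=Z_2$ a smooth cubic threefold and $C_i$ a conic --- is precisely the \emph{right-hand} side of link~$\mathbf{6}$, whose left-hand side is a quartic del~Pezzo fibration over $\mathbb{P}^1$, not a birational contraction. So your numerical computation, even if carried out correctly, lands on the wrong answer; and any argument that concludes ``both sides are blow-ups of conics on cubic threefolds'' is necessarily flawed, since that configuration does not occur in the table with both $\pi_i$ birational. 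For the correct case one checks $-K_{\mathbb{P}^3}^3-2\cdot 20+2\cdot 0-2=22$ and $h^{1,2}(\mathbb{P}^3)+0=0$, matching the $(22,0)$ row.

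There is a second gap in your treatment of point contractions. You propose to exclude Mori types $\mathrm{E2}$--$\mathrm{E5}$ by running $Z_i$ over the list in \cite{IskovskikhProkhorov}, but for types $\mathrm{E3}$, $\mathrm{E4}$, $\mathrm{E5}$ the target $Z_i$ is a \emph{singular} terminal Fano threefold of Picard rank one, and no such classification is available in \cite{IskovskikhProkhorov}. The paper handles exactly this issue separately: when both $Z_1$ and $Z_2$ are singular, it invokes \cite[Tables~8--9]{CutroneMarshburn} to force $-K_X^3\in\{2,4\}$, then shows (via factoriality results \cite{Cheltsov2006,Cheltsov2009,Cheltsov2010,CheltsovPark2010,Shramov} when $|-K_X|$ is basepoint-free, and via \cite{JahnkeRadloff2006} otherwise) that $X$ is either factorial or admits a del~Pezzo fibration, contradicting the standing hypotheses. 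Your outline does not supply a substitute for this step.
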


\begin{proof}
Both $Z_1$ and $Z_2$ are (possibly singular) Fano threefolds,
and \mbox{$\mathrm{rk}\,\mathrm{Pic}(Z_1)=\mathrm{rk}\,\mathrm{Pic}(Z_2)=1$}.

Suppose $Z_1$ is smooth so that $\pi_1$ is a contraction of type $E1$ or $E2$ in \cite[Theorem 1.32]{KM97} and $\pi_2$ is a contraction of type $E1$--$E5$. Then possibilities for $h^{1,2}(Z_1)$ are listed in the~two tables presented~above.
Using \cite{CutroneMarshburn}, we obtain all possible values of $h^{1,2}(X_1)$.
Now, using~\eqref{equation:h-1-2}, in combination with the list of Sarkisov links in \cite[Tables 1--7]{CutroneMarshburn} we see, carrying out a case-by-case analysis, that $Z_1\cong Z_2\cong\mathbb{P}^3$,
and $\pi_1$ and $\pi_2$ are the blow ups along smooth rational curves of degree $5$. Alternatively, one can run a short computer program exhausting all the possibilities for $Z_1$ and $Z_2$ and reach the same conclusion.

Therefore, to show that \eqref{equation:link} is the~link $\mathbf{13}$ in the~table it suffices to explain why the rational quintic curves
are not contained in a quadric.
Indeed,
none of these curves are contained in a smooth quadric surface,
because in that case one of the rulings of this quadric will be contracted in the anticanonical model, 
but birational morphisms $\phi_1$ and $\phi_2$ are small by construction.
Furthermore, a degree $5$ smooth rational curve in $\mathbb{P}^3$ is never contained in a singular quadric.

Thus, we may assume that both $Z_1$ and $Z_2$ are singular.
Now, using \cite[Tables 8--9]{CutroneMarshburn}, we get $-K_X^3\in\{2,4\}$.
Hence, if $|-K_X|$ does not have base points, then $X$ is one of the~following threefolds:
\begin{enumerate}
\item sextic hypersurface in $\mathbb{P}(1,1,1,1,3)$,
\item quartic hypersurface in $\mathbb{P}^4$,
\item complete intersection of a quadric cone and a quartic hypersurface in $\mathbb{P}(1,1,1,1,1,2)$.
\end{enumerate}
Indeed, by the Riemann-Roch theorem \cite[Corollary 2.1.14]{IskovskikhProkhorov},
$|-K_X|$ defines a finite map $\phi\colon X \rightarrow \mathbb P^N$ with $N = 3$ 
(respectively $N = 4$) when $-K_X^3 = 2$
(respectively $-K_X^3 = 4$).
We have $\deg(\phi(X))\cdot \deg(\phi)=-K_X^3$. If $-K_X^3=2$, then $\phi$ is a double cover of $\mathbb P^3$ ramified at a sextic hypersurface by Hurwitz's formula, thus giving the first case. If $-K_X^3=4$ we either get that $\deg(\phi)=1$ and $\phi(X)$ is a quartic threefold or $\deg(\phi)=\deg(\phi(X))=2$ and we get the last case. 

By studying the defect, in each of these cases, the~threefold $X$ is factorial
as it follows from \cite{Cheltsov2006,Cheltsov2009,Cheltsov2010,CheltsovPark2010,Shramov},
contradicting our assumption. 

Therefore $|-K_X|$ has base points,
hence using \cite[Theorem 1.1 (i)]{JahnkeRadloff2006}, we see that $-K_X^3=2$,
and $X$ is the complete intersection of a quadric cone and a sextic hypersurface in $\mathbb{P}(1,1,1,1,2,3)$ on variables $x_0, \ldots, x_5$. We can assume that
the quadric
cone is given by $x_0x_1-x_2x_3=0$. 
Then the projection on $x_0, x_2$ coordinates gives (after a small resolution of the singularity) a fibration by del Pezzo surfaces of degree $1$. Similarly, the projection on $x_0,x_3$ coordinates gives another such fibration. 
This  implies that \eqref{equation:link} is the~Sarkisov link $\mathbf{1}$ in the~table,
so that $\pi_2$ is not birational, which contradicts our assumption.
\end{proof}

Thus, we may assume that $\pi_1$ is a conic bundle,
and  either $\pi_2$ is birational, or $\pi_2$ is a conic~bundle.
Then the surface $Z_1$ is smooth \cite[(3.5.1)]{Mori}, which implies that $Z_1=\mathbb{P}^2$, since $X_1$ has Picard rank two.
Let $d_1$ be the~degree of the~discriminant curve of the~conic bundle $\pi_1$.
Then \cite[1.6 Main Theorem]{Sarkisov} implies $0 \leqslant d_1\leqslant 11$;
$d_1=0$ if $\pi_1$ is a $\mathbb{P}^1$-bundle.
By \cite{Beauville}, we get 
\begin{equation}
\label{equation:h12-degree}\tag{$\spadesuit$}
h^{1,2}(X_1)=\frac{d_1(d_1-3)}{2},
\end{equation}
so $d_1\not\in\{1,2\}$. 
Using \eqref{equation:h-1-2} and the~list of possible values of $h^{1,2}$ presented in tables above, we get
$$
d_1\in\{0,3,4,5,7,8\}.
$$
Using the Observation above, for the rest of the proof we will assume that $I=1$. Therefore we have
\begin{equation}
\label{equation:list}
\tag{$\diamondsuit$}
(d,h^{1,2},d_1)\in
\left\{
(6,20,8), (8,14,7), (14,5,5), (18,2,4), (22,0,0),
(22,0,3)
\right\}.
\end{equation}

Let $D_2$ be a Cartier divisor on $X_2$, let $D_1$ be its strict transform on $X_1$,
and let $H_1$ be a sufficiently general surface in  $|\pi_1^*(\mathcal{O}_{\mathbb{P}^2}(1))|$.
Then $D_1\sim_{\mathbb{Q}} a(-K_{X_1})-bH_1$ for some rational numbers $a$ and~$b$.
Moreover, if $d_1\ne 0$, then $a$ and $b$ are integers, because
the conic bundle has no sections
and the Picard group of the generic fiber is generated by its canonical class.
If $d_1=0$, then $2a$ and $2b$ are integers, because the Picard group of the generic fiber is generated by the class of a section. On the other hand we have (e.g. see \cite[Lemma~A.3]{CheltsovRubinstein})
\begin{align*}
-K_{X_1}\cdot D_1^2&=-K_{X_2}\cdot D_2^2,\\
\big(-K_{X_1}\big)^2\cdot D_1&=\big(-K_{X_2}\big)^2\cdot D_2.
\end{align*}
Moreover, we have \cite[Proposition 6]{Cheltsov2004}
\begin{equation}
\label{equation:dagger}
\tag{$\dagger$}
-K_{X_1}^3=d,\;\;
(-K_{X_1})^2\cdot H_1=12-d_1,\;\;
-K_{X_1}\cdot H_1^2=2,\;\; 
H_1^3=0.
\end{equation}
This gives
\begin{equation}
\label{equation:Takeuchi}
\tag{$\heartsuit$}
\left\{\aligned
&da^2-2(12-d_1)ab+2b^2=-K_{X_2}\cdot D_2^2,\\
&da-(12-d_1)b=\big(-K_{X_2}\big)^2\cdot D_2.
\endaligned
\right.
\end{equation}

\begin{lemma*}
Suppose that $\pi_2$ is birational.
Then \eqref{equation:link} is either the~link $\mathbf{11}$ or the~link  $\mathbf{14}$ in the~table.
\end{lemma*}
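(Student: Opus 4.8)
The plan is to run a Takeuchi-style numerical analysis of the birational contraction $\pi_2$, organised by the Mori classification of extremal divisorial contractions from the smooth threefold $X_2$. Since $\pi_2$ is birational and $\mathrm{rk}\,\mathrm{Pic}(Z_2)=1$, the target $Z_2$ is a (possibly singular) Fano threefold of Picard rank one and $\pi_2$ is one of the types $E1$--$E5$ of \cite[Theorem 1.32]{KM97}; let $F_2$ denote its exceptional divisor. First I would compute, for each type, the two intersection numbers $-K_{X_2}\cdot F_2^2$ and $(-K_{X_2})^2\cdot F_2$: for $E1$ (blow up of a smooth curve $C\subset Z_2$ of genus $g$) these equal $2g-2$ and $(-K_{Z_2})\cdot C+2-2g$, while for the point and singular contractions $E2$--$E5$ they equal $-2$ and one of $4,2,2,1$ respectively. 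Taking $D_2=F_2$, so that $D_1=F_1\sim_{\mathbb Q} a(-K_{X_1})-bH_1$ with $a,b\in\mathbb{Z}$ when $d_1\ne0$ (and $2a,2b\in\mathbb{Z}$ in the case $d_1=0$), the system \eqref{equation:Takeuchi} turns into two Diophantine equations relating $a,b$ to $g$ and to $(-K_{Z_2})\cdot C$.

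Next I would cut down the unknowns using the two global constraints already available. The identity $(-K_{X_2})^3=d$ together with the blow-up formula $(-K_{X_2})^3=(-K_{Z_2})^3-2(-K_{Z_2})\cdot C+2g-2$ expresses $(-K_{Z_2})\cdot C$ in terms of $(-K_{Z_2})^3$, $g$ and $d$; and the Hodge identity \eqref{equation:h-1-2}, in the form $h^{1,2}(X_2)=h^{1,2}(Z_2)+g$ for $E1$ (and $h^{1,2}(X_2)=h^{1,2}(Z_2)$ for the point contractions), pins down $g$ once a candidate $Z_2$ is fixed. Since $X_2$ carries the prescribed $h^{1,2}$ from \eqref{equation:list}, and $Z_2$ ranges over the finite list of rank-one Fano threefolds (the singular ones entering only through $E3$--$E5$), this leaves, for each of the six entries of \eqref{equation:list} and each admissible $Z_2$, a purely numerical system in $(a,b)$.

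I would then solve each such system by eliminating $a$ to obtain a single equation of the shape $b^2=\dfrac{dN-M^2}{2d-e^2}$, where $e=12-d_1$, $N=-K_{X_2}\cdot F_2^2$ and $M=(-K_{X_2})^2\cdot F_2$, and retain only the solutions for which $b^2$ is a perfect square, $a$ is a positive integer (positivity of $a$ is forced, since $F_1\cdot(\text{fibre of }\pi_1)=2a\geqslant0$ and $F_1$ is irreducible and not vertical), $g\geqslant0$, and $(-K_{Z_2})\cdot C>0$. Carried out over the whole list, exactly two configurations should survive: $(d,h^{1,2},d_1)=(18,2,4)$ with $Z_2\cong\mathbb{P}^3$ and $\pi_2$ the blow up of a curve of genus $2$ and degree $6$, and $(d,h^{1,2},d_1)=(22,0,3)$ with $Z_2$ the quadric threefold and $\pi_2$ the blow up of a rational quintic curve. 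These are the links $\mathbf{11}$ and $\mathbf{14}$, and I would finish by matching them to the table and citing the relevant constructions.

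The main obstacle is the completeness of the elimination rather than any single computation. One must genuinely exhaust all five contraction types, and for $E3$--$E5$ deal with singular targets $Z_2$, where both the Fano list and the Hodge bookkeeping require extra care. The delicate point is discarding the numerous near-miss branches---high-genus curves inside Fano threefolds of large $h^{1,2}$, together with the singular-point contractions---for which one checks that the conjunction of integrality of $(a,b)$, positivity of $a$ and of $(-K_{Z_2})\cdot C$, and the perfect-square condition on $b^2$ always fails; and finally to confirm that the two surviving numerical solutions are actually realised by genuine Sarkisov links, which is read off from the classifications quoted in the table.
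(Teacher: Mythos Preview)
Your proposal is correct and follows essentially the same route as the paper: classify $\pi_2$ by Mori's types $E1$--$E5$, compute $-K_{X_2}\cdot D_2^2$ and $(-K_{X_2})^2\cdot D_2$ for the exceptional divisor in each case, feed these into \eqref{equation:Takeuchi} together with the blow-up formula for $(-K_{X_2})^3$ and the Hodge constraint $h^{1,2}(Z_2)+g=h^{1,2}$, and eliminate over the finite list \eqref{equation:list} of admissible $(d,h^{1,2},d_1)$ and the finite list of rank-one Fano targets $Z_2$. The paper organises the case analysis slightly differently---first ruling out all point contractions (your $E2$--$E5$) in one sweep by checking that $2a$ is never a non-negative integer, and only then running the $E1$ enumeration---but the content is the same, and the two surviving solutions are exactly your links $\mathbf{11}$ and $\mathbf{14}$. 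One small simplification: for $E3$--$E5$ you need not worry about the Fano list or Hodge theory of the singular target $Z_2$ at all, since the intersection numbers of the exceptional divisor already determine the equations \eqref{equation:Takeuchi} completely; the paper exploits this and dispatches these cases before ever invoking the classification of $Z_2$.
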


\begin{proof}
Let $D_2$ be the~$\pi_2$-exceptional surface. Then $a=D_1\cdot H_1^2\geqslant 0$.

If $\pi_2(D_2)$ is a point, then it follows from \cite[Theorem (3.3)]{Mori} that one of the~following cases holds:
\begin{itemize}
\item[$(\mathrm{A})$] $D_2=\mathbb{P}^2$ with normal bundle $\mathcal{O}(-1)$,
\item[$(\mathrm{B})$] $D_2=\mathbb{P}^2$ with normal bundle $\mathcal{O}(-2)$,
\item[$(\mathrm{C})$] $D_2$ is an irreducible quadric surface in $\mathbb{P}^3$
with normal bundle $\mathcal{O}(-1)$.
\end{itemize}
A simple computation using the adjunction formula implies that
$
-K_{X_2}\cdot D_2^2= -2$
and
$$
\big(-K_{X_2}\big)^2\cdot D_2=\left\{\aligned
&4\ \text{in the~case}\ (\mathrm{A}),\\
&1\ \text{in the~case}\ (\mathrm{B}),\\
&2\ \text{in the~case}\ (\mathrm{C}).
\endaligned
\right.
$$
Now, solving \eqref{equation:Takeuchi} for each triple $(d,h^{1,2},d_1)$ listed in \eqref{equation:list},
we see that $2a$ is never a non-negative integer. This shows that $\pi_2(D_2)$ is not a point.

We see that $Z_2$ is a smooth Fano threefold of Picard rank $1$,
and $\pi_2(D_2)$ is a smooth curve in~$Z_2$.
Then it follows from \cite[Theorem~7.14]{JahnkePeternellRadloff2011} and \eqref{equation:h-1-2} that
\eqref{equation:link} is one of the~Sarkisov links $\mathbf{11}$ and  $\mathbf{14}$,
which would complete the~proof of the~lemma.

Note, however, that the~paper \cite{JahnkePeternellRadloff2011} has gaps \cite[Remark~1.18]{CheltsovShramov}.
For instance, the~link in the Construction below
contradicts \cite[Theorem~7.4]{JahnkePeternellRadloff2011},
and few examples constructed in~\cite{Yasutake} contradict~\cite[Proposition 7.2]{JahnkePeternellRadloff2011}.
Keeping this in mind, let us complete the~proof of the~lemma without using \cite[Theorem~7.14]{JahnkePeternellRadloff2011}.

Set $C_2=\pi_2(D_2)$.
Let $d_2=-K_{Z_2}\cdot C_2$, and let $g_2$ be the~genus of the~curve $C_2$. Then as $\pi_2$ is the blow up along a curve on a threefold, we have that
$$
h^{1,2}(Z_2)+g_2=h^{1,2}\in\{0,2,5,14,20\},
$$
where the inclusion follows from \eqref{equation:list}.
As a result, using the classification of smooth Fano threefolds \cite[\S 12.2]{IskovskikhProkhorov}, we get $h^{1,2}(Z_2)\in\{0,2,3,5,7,10,14,20\}$.
In fact, we can say a bit more. Let $e=-K_{Z_2}^3$, let $i$ be the~index of the~Fano threefold~$Z_2$. Then
\begin{itemize}
\item $(e,i)=(64,4)$  $\iff$ $Z_2=\mathbb{P}^3$,
\item $(e,i)=(54,3)$  $\iff$ $Z_2$ is a smooth quadric threefold in $\mathbb{P}^4$.
\end{itemize}
Moreover, the~possible values  $h^{1,2}(Z_2)\leqslant 20$ can be listed as follows.

\begin{center}
\renewcommand\arraystretch{1.4}
\begin{tabular}{|c||c|c|c|c|c|c|c|c|}
  \hline
$(e,i)$             & $(6,1)$ & $(8,1)$ & $(10,1)$ & $(12,1)$ & $(14,1)$, & $(16,1)$ & $(18,1)$ & $(22,1)$\\
  \hline
$h^{1,2}(Z_{2})$    & $20 $   & $14$    & $10$      & $7$     & $5$       & $3$      & $2$      & $0$    \\
  \hline
\end{tabular}
\end{center}

\begin{center}
\renewcommand\arraystretch{1.4}
\begin{tabular}{|c||c|c|c|c|c|c|c|c|c|c|c|c|c|c|}
  \hline
$(e,i)$            & $(16,2)$ & $(24,2)$ & $(32,2)$ & $(40,2)$ & $(54,3)$ & $(64,4)$\\
  \hline
$h^{1,2}(Z_{2})$   &  $10$     & $5$     & $2$      & $0$      & $0$      & $0$\\
  \hline
\end{tabular}
\end{center}
\noindent
This leaves not so many possibilities for the~genus $g_2=h^{1,2}-h^{1,2}(Z_2)$.

One the~other hand, it follows from \cite[Lemma 4.1.2]{IskovskikhProkhorov} that
\begin{align*}
-K_{X_2}\cdot D_2^2&=2g_2-2,\\
(-K_{X_2})^2\cdot D_2&=d_2+2-2g_2,\\
-K_{X_2}^3&=e-2+2g_2-2d_2,
\end{align*}
so that \eqref{equation:Takeuchi} gives
$$
\left\{\aligned
&da^2-2(12-d_1)ab+2b^2=2g_2-2,\\
&da-(12-d_1)b=d_2+2-2g_2,\\
&d=e-2+2g_2-2d_2.
\endaligned
\right.
$$
Now, solving this system of equations for each triple $(d,I,h^{1,2},d_1)$ listed in \eqref{equation:list},
and each possible triple $(e,i,g_2)=(e,i,h^{1,2}-h^{1,2}(Z_2))$, we obtain the~following two cases:
\begin{itemize}
\item[$(\mathrm{I})$] $d=18$, $I=1$, $h^{1,2}=2$, $d_1=4$, $Z_2=\mathbb{P}^3$, $d_2=24$, $g_2=2$, $a=3$, $b=4$;
\item[$(\mathrm{II})$] $d=22$, $I=1$, $h^{1,2}=0$, $d_1=3$, $Z_2$ is a smooth quadric in $\mathbb{P}^4$, $d_2=15$, $g_2=0$, $a=3$, $b=4$;
\end{itemize}
In the~case $(\mathrm{I})$, \eqref{equation:link} is the~link $\mathbf{11}$ in the~table.
In the~case $(\mathrm{II})$, \eqref{equation:link} is the~link $\mathbf{14}$ in the~table.
\end{proof}

Therefore, we may assume that $\pi_2$ is also a conic bundle and $Z_2=\mathbb{P}^2$.
Let $d_2$ be the~discriminant curve of the~conic bundle $\pi_2$.
Using \eqref{equation:h12-degree}
and $h^{1,2}(X_1) = h^{1,2}(X_2)$ we obtain that either 
$d_1 = d_2$
or $d_1, d_2 \in \{0, 3\}$.
Now, we let $D_2$ be a general surface in $|\pi_2^*(\mathcal{O}_{\mathbb{P}^2}(1))|$.
Then \eqref{equation:Takeuchi} simplifies as
$$
\left\{\aligned
&da^2-2(12-d_1)ab+2b^2=2,\\
&da-(12-d_1)b=12-d_2.
\endaligned
\right.
$$
Solving these equations for each quadruple $(d,h^{1,2},d_1)$ listed in \eqref{equation:list},
we get the~following cases:
\begin{enumerate}
\item[($\mathrm{1}$)] $a=0$, $b=-1$;
\item[($\mathrm{2}$)] $d=14$, $I=1$, $h^{1,2}=5$, $d_1=d_2=5$, $a=1$, $b=1$.
\end{enumerate}
In the~case ($\mathrm{1}$), the~composition $\varphi_2\circ\varphi_1^{-1}$ is biregular. This contradicts our initial assumption.
So, the~case ($\mathrm{2}$) holds. Then \eqref{equation:link} is the~link $\mathbf{7}$~in~the~table, which proves the~theorem.

\medskip

Let us conclude this paper by showing that the~Sarkisov link $\mathbf{7}$~in~the~table is always obtained using the following:

\begin{construction*}[{\cite[\S~3.4 Case $4^o$]{Prokhorov2023}}]
Let $\overline{E}=\{z_1=z_2=0\}\subset\mathbb{P}^2_{x_1,y_1,z_1}\times\mathbb{P}^2_{x_2,y_2,z_2}$,
and let
$$
\overline{X}=\big\{z_1f(x_1,y_1,z_1;x_2,y_2,z_2)=z_2g(x_1,y_1,z_1;x_2,y_2,z_2)\big\},
$$
where $f$ and $g$ are sufficiently general polynomials of bi-degrees $(1,2)$ and $(2,1)$, respectively.
Then $\overline{X}$ is a singular Verra threefold (a bidegree $(2,2)$ threefold in $\mathbb P^2\times \mathbb P^2$) with $5$ nodes.
Note that $\overline{E}\cong\mathbb{P}^1\times\mathbb{P}^1$, $\overline{E}\subset \overline{X}$ and
$$
\mathrm{Sing}(\overline{X})=\big\{z_1=z_2=f=g=0\big\}\subset\overline{E}.
$$
Let $\rho\colon\mathbb{P}^2_{x_1,y_1,z_1}\times\mathbb{P}^2_{x_2,y_2,z_2}\dashrightarrow\mathbb{P}^4_{x,y,z,t,w}$ be the~rational map given by
$$
\big([x_1:y_1:z_1],[x_2:y_2:z_2]\big)\mapsto\big[x_1z_2:y_1z_2:x_2z_1:y_2z_1:z_1z_2\big].
$$
Then $\rho$ is birational, and the~inverse map $\rho^{-1}$ is given by $[x:y:z:t:w]\mapsto([x:y:w],[z:t:w])$.
Let $\xi\colon W\to\mathbb{P}^2_{x_1,y_1,z_1}\times\mathbb{P}^2_{x_2,y_2,z_2}$ be the~blow up along the~surface~$\overline{E}$
and
let $\mathscr{E}$ be its exceptional divisor.
Let $\overline{G}_1=\{z_1=0\}$ and $\overline{G}_2=\{z_2=0\}$, and let $G_1$ and $G_2$ be the proper transforms on $W$ of $\overline{G}_1$ and $\overline{G}_2$.
Then we have the~following commutative diagram:
$$
\xymatrix{
&W\ar[dl]_{\xi}\ar[drrr]^{\theta}&&&\\
\mathbb{P}^2_{x_1,y_1,z_1}\times\mathbb{P}^2_{x_2,y_2,z_2}\ar@{-->}[rrrr]_{\rho}&&&&\mathbb{P}^4_{x,y,z,t,w}&}
$$
where $\theta$ blows down $G_1$ and $G_2$ to the~lines $\ell_1=\{z=t=w=0\}$ and $\ell_2=\{x=y=w=0\}$.
Note~that $\theta(\mathscr{E})$ is the~hyperplane $\{w=0\}$ --- the~unique hyperplane containing the~lines $\ell_1$ and~$\ell_2$.
Set~$V=\rho(\overline{X})$. Then $V$ is a smooth cubic threefold in $\mathbb{P}^4_{x,y,z,t,w}$. Moreover, we have
$$
V=\big\{f(x,y,w;z,t,w)=g(x,y,w;z,t,w)\big\}\subset\mathbb{P}^4_{x,y,z,t,w}.
$$
Now, let $\widehat{X}$ be the~strict transform of the~threefold $\overline{X}$ on $W$,
let $\varsigma\colon \widehat{X}\to\overline{X}$ be the~morphism induced by~$\xi$,
and let $\nu\colon \widehat{X}\to V$ be the~morphism induced by~$\theta$.
Then $\widehat{X}$ is smooth, $\varsigma$ is a small projective resolution,
and we have the~following commutative diagram:
$$
\xymatrix{
&\widehat{X}\ar[dl]_{\varsigma}\ar[dr]^{\nu}&\\
\overline{X}\ar@{-->}[rr]_{\rho\vert_{\overline{X}}}&&V}.
$$
Note that $\nu$ is the~blow up of the~cubic threefold $V$ along the~lines $\ell_1$ and $\ell_2$.
Let $\widehat{E}=\mathscr{E}\vert_{\widehat{X}}$. Then
\begin{itemize}
\item the~induced map $\varsigma\vert_{\widehat{E}}\colon \widehat{E}\to\overline{E}$ is the blow up at the~points in $\mathrm{Sing}(\overline{X})$,
\item $\widehat{E}$ is isomorphic to a smooth cubic surface,
\item $\nu(\widehat{E})$ is the~hyperplane section $\{w=0\}\cap V$.
\end{itemize}
Now, we extend the~last commutative diagram to the~following commutative diagram:
$$
\xymatrix{
&&V&&\\
V_1\ar[d]_{\upsilon_1}\ar[urr]^{\psi_1}&&\widehat{X}\ar[d]_{\varsigma}\ar[u]_{\nu}\ar[ll]^{\nu_2}\ar[rr]_{\nu_1}&&V_2\ar[d]^{\upsilon_2}\ar[ull]_{\psi_2}\\
\mathbb{P}^2_{x_1,y_1,z_1}&&\overline{X}\ar[rr]_{\mathrm{pr}_2}\ar[ll]^{\mathrm{pr}_1}&&\mathbb{P}^2_{x_2,y_2,z_2}}
$$
Here $\psi_1$ and $\psi_2$ are the blow ups along the~lines $\ell_1$ and $\ell_2$, respectively,
$\nu_1$ and $\nu_2$ are the blow ups along the~strict transforms of the~lines $\ell_1$ and $\ell_2$, respectively,
both $\upsilon_1$ and $\upsilon_2$  are standard conic bundles~\cite{Prokhorov2018},
and~$\mathrm{pr}_1$ and $\mathrm{pr}_2$ are the natural projections.
Let $\Delta_1$ and $\Delta_2$ be the~discriminant curves of the~conic bundles $\upsilon_1$ and $\upsilon_2$, respectively.
Then $\Delta_1$ and $\Delta_2$ are quintic curves with at most nodal singularities.
Since $\varsigma$ is a flopping contraction, there exists a composition of flops $\chi\colon\widehat{X}\dasharrow\widetilde{X}$ of the 5~curves contracted by $\varsigma$ (this is the only projective flop which exists because the relative Picard number of $\varsigma$ equals $1$).
Then $\widetilde{X}$ is smooth and projective, and we have another commutative diagram:
$$
\xymatrix{
\widetilde{X}\ar[dr]_{\sigma}&&\widehat{X}\ar@{-->}[ll]_{\chi}\ar[dl]_{\varsigma}\ar[dr]^{\nu}&\\
&\overline{X}\ar@{-->}[rr]_{\rho\vert_{\overline{X}}}&&V}
$$
where $\sigma$ is a small resolution. Let $E=\chi(\widehat{E})$.
Then $\chi$ induces a~morphism $\widehat{E}\to E$ that blows down all five curves contracted by $\varsigma$,
which implies that $\sigma$ induces an~isomorphism $E\cong\overline{E}\cong\mathbb{P}^1\times\mathbb{P}^1$.
Note that $E\vert_{E}\sim\mathcal{O}_{E}(-1,-1)$,
and there exists a birational morphism $\eta\colon\widetilde{X}\to X$ that blows down the~surface $E$ to an~ordinary double point of the~threefold $X$.
We have $-K_X^3=-K_{\overline{X}}^3-2=14$ and
$$
1=\mathrm{rk}\,\mathrm{Pic}(X)<\mathrm{rk}\,\mathrm{Cl}(X)=1+\big|\mathrm{Sing}(X)\big|=2.
$$
Therefore, the~threefold $X$ is a non-factorial nodal Fano threefold that has one node.
We complete the picture with the following commutative diagram
$$
\xymatrix{
&&X&&\\
X_1\ar[d]_{\pi_1}\ar[rru]^{\phi_1}&&\widetilde{X}\ar[rr]^{\varphi_2}\ar[ll]_{\varphi_1}\ar[u]_{\eta}\ar[d]_{\sigma}&&X_2\ar[llu]_{\phi_2}\ar[d]^{\pi_2}&\\
\mathbb{P}^2_{x_1,y_1,z_1}&&\overline{X}\ar[rr]_{\mathrm{pr}_2}\ar[ll]^{\mathrm{pr}_1}&&\mathbb{P}^2_{x_2,y_2,z_2}\\
V_1\ar[u]^{\upsilon_1}\ar[drr]_{\psi_1}&&\widehat{X}\ar[u]_{\varsigma}\ar[d]_{\nu}\ar[ll]^{\nu_2}\ar[rr]_{\nu_1}&&V_2\ar[u]_{\upsilon_2}\ar[dll]^{\psi_2}\\
&&V&&}
$$
where $\phi_1$ and $\phi_2$ are two small resolutions such that the~composition $\phi_1^{-1}\circ\phi_2$ is an Atiyah flop,
both~$\varphi_1$ and $\varphi_2$ are contractions of the~surface $E$ to curves,
$\pi_1$ and $\pi_2$ are standard conic bundles whose discriminant curves are $\Delta_1$ and $\Delta_2$, respectively.
Note that $X$ is irrational as it is birational to a smooth 
cubic threefold \cite{ClemensGriffiths}, and
$$
h^{1,2}(X_1)=h^{1,2}(X_2)=h^{1,2}(\widetilde{X})=h^{1,2}(\widehat{X})=h^{1,2}(V)=5.
$$
Instead of using the~Verra threefold $\overline{X}$ containing $\overline{E}$,
we can construct the~nodal threefold $X$ using the~birational map $\rho^{-1}$,
and the~smooth cubic threefold $V$ containing the~lines $\ell_1$ and $\ell_2$.
\end{construction*}

Now consider~link $\mathbf{7}$ in the~table:
$Z_1 = Z_2 = \mathbb{P}^2$, and both 
$\pi_1$
and $\pi_2$ are conic bundles
with discriminant curves of degree $5$.
Let $C_1$ and $C_2$ be the~curves contracted by $\phi_1$ and $\phi_2$, respectively.

Recall that we denote by $H_1$ (respectively $D_2$)
the pullback of the ample generator by $\pi_1$
from $Z_1$ (respectively by $\pi_2$ from $Z_2$), and
$D_1$ is the divisor corresponding to $D_2$ on $X_1$ under flop.
Then it follows from the calculations above (see case (2) before the Construction)
that $D_1\sim -K_{X_1}-H_1$.
We have
$$
-1=\big(-K_{X_1}-H_1\big)^3=D_1^3=D_2^3-\big(D_2\cdot C_2\big)^3=-\big(D_2\cdot C_2\big)^3,
$$
where we used \eqref{equation:dagger} in the first equality,
and
\cite[Lemma~A.3]{CheltsovRubinstein} in the third one.
It follows that $D_2\cdot C_2=1$.
Similarly, we get $H_1\cdot C_1=1$.

Let $h_1 = \varphi_1^*(H_1)$ and $h_2 = \varphi_2^*(D_2)$.
A simple computation using $D_2 \cdot C_2 = 1$ implies that
\[
\varphi_1^*(D_1) \sim h_2 + E.
\]
Thus we can express the canonical class $-K_{\widetilde{X}}$ in terms of $h_1$ and $h_2$ as follows
$$
-K_{\widetilde{X}}\sim -\varphi_1^*(K_{X_1}) - E 
\sim \varphi_1^*(H_1 + D_1) - E \sim
h_1 + h_2.
$$
Note that $-K_{\widetilde{X}}^3=12$, $h^{1,2}(\widetilde{X})=5$ and $\mathrm{rk}\,\mathrm{Pic}(\widetilde{X})=3$,
which implies that $-K_{\widetilde{X}}$ is not ample,
because smooth Fano threefolds with these  invariants do not exist \cite[Table 3]{MoriMukai}.

Combining $\pi_1\circ\varphi_1$ and $\pi_2\circ\varphi_2$, we obtain a morphism $\widetilde{X}\to\mathbb{P}^2\times\mathbb{P}^2$.
Let $\overline{X}$ be its image, and let $\sigma\colon\widetilde{X}\to\overline{X}$ be the~induced~morphism.

\begin{claim*}
The threefold $\overline{X} \subset \P^2 \times \P^2$ is a 
divisor of bidegree $(2,2)$ with terminal singularities, containing a linearly embedded surface
$\P^1 \times \P^1$,
and $\sigma$ is a small resolution.
 \end{claim*}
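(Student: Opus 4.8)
The plan is to read everything off from the intersection numbers of $h_1$ and $h_2$ on $\widetilde{X}$. Using the projection formula for $\varphi_1$, the relation $D_1\sim -K_{X_1}-H_1$, the identity $\varphi_1^*(D_1)\sim h_2+E$, and \eqref{equation:dagger}, together with the fact that $E$ is contracted by $\varphi_1$ to a curve, I would first establish
$$
h_1^3=h_2^3=0,\qquad h_1^2\cdot h_2=h_1\cdot h_2^2=2,
$$
the symmetric identities following by exchanging the two sides of the link. These are compatible with $(h_1+h_2)^3=-K_{\widetilde{X}}^3=12$.

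Writing $p,q$ for the hyperplane classes of the two factors of $\mathbb{P}^2\times\mathbb{P}^2$, we have $\sigma^*p=h_1$ and $\sigma^*q=h_2$. First I would show that $\sigma$ is generically finite: by the projection formula $\sigma_*[\widetilde{X}]\cdot p q^2=h_1\cdot h_2^2=2\neq 0$, so $\dim\overline{X}=3$ and $\overline{X}$ is a hypersurface of some bidegree $(a,b)$ with $\deg(\sigma)\cdot a=\deg(\sigma)\cdot b=2$. Since both projections are dominant we have $a,b\geqslant 1$, so $(\deg\sigma,a,b)$ is either $(1,2,2)$ or $(2,1,1)$.

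The crux, and the step I expect to be the main obstacle, is ruling out the second case, i.e. proving that $\sigma$ is birational. Here I would exploit the discriminant degree $d_1=5$. If $(a,b)=(1,1)$, then the fibres of $\mathrm{pr}_1|_{\overline{X}}$ are lines, and $\sigma$ restricts on the general fibre conic of $\pi_1\circ\varphi_1=\mathrm{pr}_1\circ\sigma$ to a double cover of a line; consequently the conic bundle $\pi_1\circ\varphi_1$ degenerates exactly along the branch curve of the double cover $\mathrm{pr}_1|_{B}\colon B\to\mathbb{P}^2$, where $B\subset\overline{X}$ is the branch surface of $\sigma$ (here $B\cdot M=2$ for a line-fibre $M$, so $\mathrm{pr}_1|_B$ is indeed $2:1$). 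A branch curve of a double cover of $\mathbb{P}^2$ has even degree, which contradicts $d_1=5$. Hence $\deg\sigma=1$, and then $(a,b)=(2,2)$ follows directly from $h_1^2\cdot h_2=h_1\cdot h_2^2=2$, proving the bidegree assertion. For the embedded $\mathbb{P}^1\times\mathbb{P}^1$: since $\varphi_1|_E\colon E\to C_1$ is a ruling, $C_1\cong\mathbb{P}^1$ and $H_1\cdot C_1=1$, we get $h_1|_E=\mathcal{O}_E(1,0)$ and symmetrically $h_2|_E=\mathcal{O}_E(0,1)$; thus the two rulings are sent isomorphically to lines in the two factors, $\sigma|_E$ is an embedding, and $\sigma(E)$ is a linearly embedded $\mathbb{P}^1\times\mathbb{P}^1$.

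It remains to see that $\sigma$ is a small resolution and that $\overline{X}$ is terminal. By adjunction on the $(2,2)$-divisor, $-K_{\overline{X}}=\mathcal{O}(1,1)|_{\overline{X}}$, so $\sigma^*(-K_{\overline{X}})=h_1+h_2=-K_{\widetilde{X}}$ and $\sigma$ is crepant; in particular $\overline{X}$ is Gorenstein, being a hypersurface in the smooth $\mathbb{P}^2\times\mathbb{P}^2$. I would then argue that $\sigma$ contracts no divisor $S$: contraction to a point is impossible because $S$ would then lie in a one-dimensional fibre of $\pi_1\circ\varphi_1$, and contraction to a curve is excluded because the fibres of $\mathrm{pr}_i|_{\overline X}$ are equidimensional (as $\overline X$ is an irreducible $(2,2)$-divisor, no fibre can jump to all of $\mathbb{P}^2$) and $\pi_1,\pi_2$ are standard conic bundles, so the $\sigma$-contracted curves (those $\gamma$ with $h_1\cdot\gamma=h_2\cdot\gamma=0$) cannot sweep out a divisor. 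Thus $\sigma$ is small, $\mathrm{Sing}(\overline{X})$ is finite, and a Gorenstein threefold admitting a small resolution by the smooth $\widetilde{X}$ has only isolated $\mathrm{cDV}$, hence terminal, singularities. The delicate point is precisely this exclusion of a divisorial contraction together with the verification that the finitely many contracted curves are $(-1,-1)$-curves producing ordinary double points; I would carry it out by a local analysis near the curves contracted by $\sigma$, matching the five nodes of the Verra threefold of the Construction.
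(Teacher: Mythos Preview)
Your argument for ruling out the $(1,1)$ double cover case has a genuine gap. You identify the degenerate locus of $\pi_1\circ\varphi_1\colon\widetilde{X}\to\mathbb{P}^2$ with the branch curve of $\mathrm{pr}_1|_B$ and then invoke $d_1=5$ to get a parity contradiction. But $d_1$ is the discriminant degree of $\pi_1\colon X_1\to\mathbb{P}^2$, not of $\pi_1\circ\varphi_1$. The two differ: over the line $\ell=\pi_1(C_1)$ (recall $H_1\cdot C_1=1$), the fibre of $\pi_1\circ\varphi_1$ acquires an extra component coming from $E$, so the degenerate locus of $\pi_1\circ\varphi_1$ is the discriminant of $\pi_1$ together with $\ell$, of total degree $5+1=6$. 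On the other side, one computes $B\sim(2p+2q)|_{\overline{X}}$ (from $-K_{\widetilde X}=h_1+h_2$ and $-K_{\overline X}=(2p+2q)|_{\overline X}$), so if $B$ is smooth one finds $K_B=0$ and the branch curve of $\mathrm{pr}_1|_B$ has degree $6$, perfectly compatible with $6$; no contradiction arises. Moreover, $\sigma$ is not literally a double cover: one must pass through the Stein factorization $\widetilde X\xrightarrow{\alpha}\widehat X\xrightarrow{\beta}\overline X$, and the curves contracted by $\alpha$ further muddy any attempt to match degenerate loci set-theoretically.

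The paper takes a different route at this step. After the Stein factorization it shows, by the same intersection calculus in $\mathrm{Pic}(\widetilde X)=\mathbb{Z}h_1\oplus\mathbb{Z}h_2\oplus\mathbb{Z}E$, that the Galois involution $\tau$ of $\beta$ satisfies $\tau_*(E)\sim E$, hence $\tau(E)=E$ and the surface $\overline{E}$ lies in the branch divisor $B$. Since $\overline{E}$ is a $(1,1)$-class and $B$ is a $(2,2)$-class on $\overline X$, $B$ must be reducible, forcing $\widehat X$ to have non-isolated singularities, contradicting that $\alpha$ is a flopping contraction with terminal Gorenstein target. For the ``no divisorial contraction'' step the paper also argues purely in $\mathrm{Pic}(\widetilde X)$: any contracted divisor $F\sim a_1h_1+a_2h_2+a_3E$ satisfies $F\cdot h_1^2=F\cdot h_2^2=F\cdot h_1h_2=0$, which forces $a_1=a_2=a_3=0$. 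This is cleaner than your geometric exclusion of contraction to a curve, which as stated is only a sketch.
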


Therefore $X$ is obtained by taking a small resolution
of the singular
Verra threefold $\overline{X}$
containing a divisor $\P^1 \times \P^1$ 
as in  Construction above.

\begin{proof}
The threefold 
$\overline{X}$ is a divisor
of bidegree $(e_1, e_2)$
in $\P^2 \times \P^2$,
with $e_1, e_2 > 0$ because
$\overline{X}$ dominates both factors.
We have
\[
12 = (h_1+h_2)^3 = \deg(\sigma) \deg(\overline{X}) = 3\deg(\sigma)(e_1+e_2)
\]
This implies that either 
$\deg(\sigma) = 1$ and
$e_1 + e_2 = 4$, in which case $e_1 = e_2 = 2$
because the two projections give rise to conic bundle structures on $\widetilde{X}$, or 
$\deg(\sigma) = 2$
and $e_1 + e_2 = 2$
so that $e_1 = e_2 = 1$ because $e_1, e_2 > 0$. 
In other words
\begin{itemize}
\item either $\overline{X}$ is a divisor of degree $(2,2)$, and $\sigma$ is birational,
\item or $\overline{X}$ is a divisor of degree $(1,1)$, and $\sigma$ is generically two-to-one.
\end{itemize}
In the~former case, 
$\sigma$ is crepant,
and
it follows from the~subadjunction formula that the~threefold $\overline{X}$ is normal.
In the~latter case, the~threefold $\overline{X}$ is also normal, because
there are only two
isomorphism classes
of irreducible $(1,1)$ divisors in $\P^2 \times \P^2$:
one is smooth and the other has one node. 

Set  $\overline{E}=\sigma(E)$.  Let $\mathrm{pr}_1\colon\overline{X}\to\mathbb{P}^2$ and $\mathrm{pr}_2\colon\overline{X}\to\mathbb{P}^2$
be the~projections to the~first and the~second factors of the~fourfold $\mathbb{P}^2\times\mathbb{P}^2$, respectively.
Then $\mathrm{pr}_1(\overline{E})$ and $\mathrm{pr}_2(\overline{E})$ are lines by $H_1 \cdot C_1 = D_2 \cdot C_2 = 1$,
so we can choose coordinates $([x_1:y_1:z_1],[x_2:y_2:z_2])$ on $\mathbb{P}^2\times\mathbb{P}^2$ such that
$$
\overline{E}=\big\{z_1=z_2=0\big\}.
$$
Since $\overline{E}\subset\overline{X}$, we see that $\overline{X}$ is singular.
Note also that $\sigma$ induces an isomorphism $E\cong\overline{E} = \mathbb{P}^1 \times \mathbb{P}^1$.

Divisor classes $h_1$, $h_2$ and $E$ generate the~group $\mathrm{Pic}(\widetilde{X})$.
We have
\[
h_1^2 \cdot h_2 = h_1 \cdot h_2^2 = 2, \quad h_1 \cdot h_2 \cdot E = 1, \quad
h_1^2 \cdot E = h_2^2 \cdot E = 0.
\]

Assume that $\sigma$ contracts a divisor 
$F\sim a_1 h_1+a_2h_2+a_3E$.
Then we have
\begin{align*}
2a_2&=F\cdot h_1^2 =0,\\
2a_1&=F\cdot h_2^2 =0,\\
2a_1+2a_2+a_3&=F\cdot h_1 \cdot h_2=0,
\end{align*}
which gives $a_1=0$, $a_2=0$, $a_3=0$. This shows that  $\sigma$ does not contract any divisors.

The Stein factorization of $\sigma$ is the~following commutative diagram:
$$
\xymatrix{
\widetilde{X}\ar[rd]_{\sigma}\ar[rr]^{\alpha}&&\widehat{X}\ar[dl]^{\beta}\\
&\overline{X}&}
$$
where $\alpha$ is a birational morphism, and $\beta$ is either an~isomorphism or a~(ramified) double cover.
Since $\sigma$ does not contract divisors and $-K_{\widetilde{X}}$ is not ample,
we see that $\alpha$ is a flopping contraction, and $\widehat{X}$ has terminal Gorenstein singularities.
We must show that $\beta$ is an~isomorphism.

Suppose $\beta$ is a double cover. Its Galois involution induces a~birational involution $\tau\in\mathrm{Bir}(\widetilde{X})$.
Then $\tau$ induces an action $\tau_*$ on $\mathrm{Pic}(\widetilde{X}) =
\mathrm{Cl}(\widehat{X})$ such that $\tau_*h_1 \sim h_1$, $\tau_*h_2 \sim h_2$,
and
$$
\tau_*(E)\sim b_1 h_1+b_2 h_2+b_3E
$$
for some integers $b_1$, $b_2$, $b_3$. Then
\begin{align*}
2b_2&=\tau_*(E)\cdot h_1^2=E\cdot h_1^2=0,\\
2b_1&=\tau_*(E)\cdot  h_2^2=E\cdot  h_2^2=0,\\
2b_1+2b_2+b_3&=\tau_*(E)\cdot h_1 \cdot h_2=E\cdot h_1 \cdot h_2=1,
\end{align*}
which gives $b_1=0$, $b_2=0$, $b_3=1$, so $\tau_*(E)\sim E$, which gives $\tau(E)=E$, since $E$ is $\eta$-exceptional.

Since $\tau(E)=E$ and $\sigma$ induces an isomorphism $E\cong\overline{E}$,
we see that the~surface $\overline{E}$ is contained in the~branch divisor of the~double cover $\beta$.
On the other hand, $\overline{E}$ can not be equal to this branch divisor
by degree reasons, thus the branch divisor is reducible.
This implies that $\widehat{X}$ has non-isolated singularities,
which is impossible, since  $\widehat{X}$ has terminal singularities.
Thus, we see that $\beta$ is an isomorphism.

We see that $\overline{X}$ is a 
singular
divisor in $\mathbb{P}^2\times\mathbb{P}^2$ of degree $(2,2)$, containing $\overline{E}$
and $\sigma$ is a small resolution.
\end{proof}

\bigskip

\textbf{Acknowledgements.}
This paper was written during our research visit to the G\"okova Geometry Topology Institute in April 2023.
We are very grateful to the~institute for its warm hospitality and to Tiago Duarte Guerreiro and Kento Fujita, who joined us in the visit on a separate project.

Ivan Cheltsov was supported by EPSRC grant EP/V054597/1.

Jesus Martinez-Garcia was supported by EPSRC grant EP/V055399/1.

Evgeny Shinder was supported by EPSRC grant EP/T019379/1 and ERC Synergy grant 854361.

Igor Krylov was supported by IBS-R003-D1 grant.

\newpage

\begin{landscape}

\renewcommand\arraystretch{1.8}
\begin{longtable}{|c||c|c|c|c|c|c|}
\caption*{Table describing all possibilities for the~Sarkisov link \eqref{equation:link}.}
\endfirsthead
\endhead
\hline
\textnumero& $d$& $I$ & $h^{1,2}$ & $\pi_1\colon X_1\to Z_1$ &  $\pi_2\colon X_2\to Z_2$ & References\\
\hline
\hline
\shortstack{\\ $\mathbf{1}$\\$\quad$\\$\quad$\\$\quad$} &
\shortstack{\\$2$\\$\quad$\\$\quad$\\$\quad$} &  \shortstack{\\$1$\\$\quad$\\$\quad$\\$\quad$} &  \shortstack{\\$52$\\$\quad$\\$\quad$\\$\quad$} &
\shortstack{\\$Z_1=\mathbb{P}^1$,\\ $\pi_1$ is a fibration into \\ del Pezzo surfaces of degree $1$.}&
\shortstack{\\$Z_2=\mathbb{P}^1$,\\ $\pi_2$ is is a fibration into \\ del Pezzo surfaces of degree $1$.} &
\shortstack{\\\cite{Grinenko,Grinenko2006,JahnkeRadloff2006},\\ \cite[(2.5.2)]{Takeuchi2022}.\\$\quad$} \\
\hline
\shortstack{\\ $\mathbf{2}$\\$\quad$\\$\quad$\\$\quad$\\$\quad$\\$\quad$} &
\shortstack{\\$6$\\$\quad$\\$\quad$\\$\quad$\\$\quad$\\$\quad$} &  \shortstack{\\$1$\\$\quad$\\$\quad$\\$\quad$\\$\quad$\\$\quad$} &  \shortstack{\\$20$\\$\quad$\\$\quad$\\$\quad$\\$\quad$\\$\quad$} &
\shortstack{\\$Z_1=\mathbb{P}^1$,\\ $\pi_1$ is a fibration into \\ del Pezzo surfaces of degree $2$.\\$\quad$\\$\quad$\\$\quad$}&
\shortstack{\\$Z_2$ is a del Pezzo threefold of degree $1$ \\ that has one singular double point,\\ $\pi_2$ is the blow up at the~singular point.\\$\quad$\\$\quad$\\$\quad$} &
\shortstack{\\$\quad$\\\cite[Proposition 5.6]{CheltsovShramovPrzyjalkowski2005},\\\cite{Grinenko,Grinenko2006},\\\cite[Example~4.3]{Prokhorov2019},\\\cite[(2.7.3)]{Takeuchi2022}.}\\
\hline
\shortstack{\\ $\mathbf{3}$\\$\quad$\\$\quad$\\$\quad$\\$\quad$\\$\quad$\\$\quad$} &
\shortstack{\\$8$\\$\quad$\\$\quad$\\$\quad$\\$\quad$\\$\quad$\\$\quad$} &  \shortstack{\\$1$\\$\quad$\\$\quad$\\$\quad$\\$\quad$\\$\quad$\\$\quad$} &  \shortstack{\\$14$\\$\quad$\\$\quad$\\$\quad$\\$\quad$\\$\quad$\\$\quad$} &
\shortstack{\\$Z_1=\mathbb{P}^1$,\\ $\pi_1$ is a fibration into cubic surfaces.\\$\quad$\\$\quad$\\$\quad$}&
\shortstack{\\$Z_2\cong \mathbb{P}^2$,\\ $\pi_2$ is a conic bundle \\ with septic discriminant curve.\\$\quad$} &
\shortstack{\\$\quad$\\$\quad$\\\cite[Proposition 5.9]{CheltsovShramovPrzyjalkowski2005},\\\cite[Example 4.6]{Prokhorov2019},\\\cite[(2.9.4)]{Takeuchi2022}.\\$\quad$} \\
\hline
\shortstack{\\ $\mathbf{4}$\\$\quad$\\$\quad$\\$\quad$} &
\shortstack{\\$10$\\$\quad$\\$\quad$\\$\quad$} &  \shortstack{\\$1$\\$\quad$\\$\quad$\\$\quad$} &  \shortstack{\\$10$\\$\quad$\\$\quad$\\$\quad$} &
\shortstack{\\$Z_1=\mathbb{P}^1$,\\ $\pi_1$ is a fibration into cubic surfaces.\\$\quad$\\$\quad$}&
\shortstack{\\$\quad$\\$\quad$\\$Z_2$ is a smooth del Pezzo threefold of degree $2$,\\ $\pi_2$ is the blow up along a smooth rational curve\\ that has anticanonical degree $2$.} &
\shortstack{\\\cite[Example~1.11]{CheltsovShramovPrzyjalkowski2005},\\ \cite[\S~3.12 Case $11^o$]{Prokhorov2023},\\\cite[(2.9.3)]{Takeuchi2022}.}\\
\hline
\shortstack{\\ $\mathbf{5}$\\$\quad$\\$\quad$\\$\quad$} &
\shortstack{\\$12$\\$\quad$\\$\quad$\\$\quad$} &  \shortstack{\\$1$\\$\quad$\\$\quad$\\$\quad$} &  \shortstack{\\$7$\\$\quad$\\$\quad$\\$\quad$} &
\shortstack{\\$Z_1=\mathbb{P}^1$,\\ $\pi_1$ is a fibration into \\ quartic del Pezzo surfaces.}&
\shortstack{\\$Z_2\cong \mathbb{P}^3$,\\ $\pi_2$ is the blow up along a smooth \\ curve of degree $8$ and genus $7$.} &
\shortstack{\\\cite[Proposition~6.5]{JahnkePeternellRadloff2011},\\ \cite[(2.11.5)]{Takeuchi2022}.\\$\quad$} \\
\hline
\shortstack{\\ $\mathbf{6}$\\$\quad$\\$\quad$\\$\quad$} &
\shortstack{\\$14$\\$\quad$\\$\quad$\\$\quad$} &  \shortstack{\\$1$\\$\quad$\\$\quad$\\$\quad$} &  \shortstack{\\$5$\\$\quad$\\$\quad$\\$\quad$} &
\shortstack{\\$Z_1=\mathbb{P}^1$,\\ $\pi_1$ is a fibration into \\ quartic del Pezzo surfaces.}&
\shortstack{\\$Z_2$ is a smooth cubic threefold,\\ $\pi_2$ is the blow up at a smooth conic.\\$\quad$} &
\shortstack{\\\cite[Proposition~6.5]{JahnkePeternellRadloff2011},\\ \cite[\S~3.13 Case $12^o$]{Prokhorov2023},\\\cite[(2.11.4)]{Takeuchi2022}.} \\
\hline
\shortstack{\\ $\mathbf{7}$\\$\quad$\\$\quad$\\$\quad$\\$\quad$} &
\shortstack{\\$14$\\$\quad$\\$\quad$\\$\quad$\\$\quad$} &  \shortstack{\\$1$\\$\quad$\\$\quad$\\$\quad$\\$\quad$} &  \shortstack{\\$5$\\$\quad$\\$\quad$\\$\quad$\\$\quad$} &
\shortstack{\\$\quad$\\$Z_1=\mathbb{P}^2$,\\ $\pi_1$ is a conic bundle \\ with quintic discriminant curve.\\$\quad$}&
\shortstack{\\$\quad$\\$Z_2=\mathbb{P}^2$,\\ $\pi_1$ is a conic bundle \\ with quintic discriminant curve.\\$\quad$} &
\shortstack{\\$\quad$\\$\quad$\\\cite[\S~3.4 Case $4^o$]{Prokhorov2023}, \\ Construction and\\ Claim in this paper.} \\
\hline
\shortstack{\\ $\mathbf{8}$\\$\quad$\\$\quad$\\$\quad$} &
\shortstack{\\$16$\\$\quad$\\$\quad$\\$\quad$} &  \shortstack{\\$1$\\$\quad$\\$\quad$\\$\quad$} &  \shortstack{\\$3$\\$\quad$\\$\quad$\\$\quad$} &
\shortstack{\\$\quad$\\$Z_1=\mathbb{P}^1$,\\ $\pi_1$ is a fibration into \\ quintic del Pezzo surfaces.}&
\shortstack{\\$\quad$\\$Z_2$ is a smooth quadric in $\mathbb{P}^4$,\\ $\pi_2$ is the blow up along a smooth \\ curve of degree $7$ and genus $3$.} &
\shortstack{\\\cite[Proposition~6.5]{JahnkePeternellRadloff2011},\\ \cite[(2.13.4)]{Takeuchi2022}.\\$\quad$}\\
\hline
\shortstack{\\ $\mathbf{9}$\\$\quad$\\$\quad$\\$\quad$\\$\quad$} &
\shortstack{\\$16$\\$\quad$\\$\quad$\\$\quad$\\$\quad$} &  \shortstack{\\$1$\\$\quad$\\$\quad$\\$\quad$\\$\quad$} &  \shortstack{\\$3$\\$\quad$\\$\quad$\\$\quad$\\$\quad$} &
\shortstack{\\$Z_1=\mathbb{P}^1$,\\ $\pi_1$ is a quadric bundle\\$\quad$\\$\quad$\\$\quad$}&
\shortstack{\\$\quad$\\$\quad$\\$Z_2=\mathbb{P}^1$,\\ $\pi_2$ is a fibration into \\ quartic del Pezzo surfaces.} &
\shortstack{\\$\quad$\\$\quad$\\ \cite[Example~4.9]{Book},\\ \cite[(2.3.8)]{Takeuchi2022},\\\cite[(2.11.2)]{Takeuchi2022}. }\\
\hline
\shortstack{\\ $\mathbf{10}$\\$\quad$\\$\quad$\\$\quad$} &
\shortstack{\\$18$\\$\quad$\\$\quad$\\$\quad$} &  \shortstack{\\$1$\\$\quad$\\$\quad$\\$\quad$} &  \shortstack{\\$2$\\$\quad$\\$\quad$\\$\quad$} &
\shortstack{\\$\quad$\\$\quad$\\$Z_1=\mathbb{P}^1$,\\ $\pi_1$ is a fibration into \\ quintic del Pezzo surfaces.}&
\shortstack{\\$\quad$\\$\quad$\\$Z_2$ is a smooth complete \\ intersection of two quadrics in $\mathbb{P}^5$,\\ $\pi_2$ is the blow up along a twisted cubic.} &
\shortstack{\\\cite[Proposition~6.5]{JahnkePeternellRadloff2011},\\ \cite[(2.13.3)]{Takeuchi2022}.\\$\quad$\\$\quad$}\\
\hline
\shortstack{\\ $\mathbf{11}$\\$\quad$\\$\quad$\\$\quad$\\$\quad$}  &
\shortstack{\\$18$\\$\quad$\\$\quad$\\$\quad$\\$\quad$} &  \shortstack{\\$1$\\$\quad$\\$\quad$\\$\quad$\\$\quad$} &  \shortstack{\\$2$\\$\quad$\\$\quad$\\$\quad$\\$\quad$} &
\shortstack{\\$Z_1\cong \mathbb{P}^2$,\\ $\pi_1$ is a conic bundle\\ with quartic discriminant curve.} &
\shortstack{\\$Z_2=\mathbb{P}^3$,\\ $\pi_2$ is the blow up along a smooth \\ curve of degree $6$ and genus $2$.}&
\shortstack{\\$\quad$\\\cite[Example~4.8]{BlancLamy},\\ \cite[Theorem~7.14]{JahnkePeternellRadloff2011},\\
Lemma in this paper.}\\
\hline
\shortstack{\\ $\mathbf{12}$\\$\quad$\\$\quad$\\$\quad$}  &
\shortstack{\\$22$\\$\quad$\\$\quad$\\$\quad$} &  \shortstack{\\$1$\\$\quad$\\$\quad$\\$\quad$} &  \shortstack{\\$0$\\$\quad$\\$\quad$\\$\quad$} &
\shortstack{\\$\quad$\\$\quad$\\$Z_1=\mathbb{P}^1$,\\ $\pi_1$ is a fibration into \\ quintic del Pezzo surfaces.}&
\shortstack{\\$Z_2\cong \mathbb{P}^2$,\\ $\pi_2$ is a $\mathbb{P}^1$-bundle.\\$\quad$\\$\quad$} &
\shortstack{\\$\quad$\\$\quad$\\\cite[(IV)]{Prokhorov16},\\\cite[(2.13.1)]{Takeuchi2022}.\\ $\quad$} \\
\hline
\shortstack{\\ $\mathbf{13}$\\$\quad$\\$\quad$\\$\quad$\\$\quad$} &
\shortstack{\\$22$\\$\quad$\\$\quad$\\$\quad$\\$\quad$} &  \shortstack{\\$1$\\$\quad$\\$\quad$\\$\quad$\\$\quad$} &  \shortstack{\\$0$\\$\quad$\\$\quad$\\$\quad$\\$\quad$} &
\shortstack{\\$\quad$\\$Z_1=\mathbb{P}^3$,\\ $\pi_1$ is the blow up along a smooth \\ rational curve of degree $5$\\ that is not contained in a quadric.}&
\shortstack{\\$\quad$\\$Z_2=\mathbb{P}^3$,\\ $\pi_1$ is the blow up along a smooth \\ rational curve of degree $5$\\ that is not contained in a quadric.} &
\shortstack{\cite[Proposition~2.11]{CutroneMarshburn},\\\cite[(I)]{Prokhorov16}.\\  $\quad$\\ $\quad$\\$\quad$}\\
\hline
\shortstack{\\ $\mathbf{14}$\\$\quad$\\$\quad$\\$\quad$}  &
\shortstack{\\$22$\\$\quad$\\$\quad$\\$\quad$} &  \shortstack{\\$1$\\$\quad$\\$\quad$\\$\quad$} &  \shortstack{\\$0$\\$\quad$\\$\quad$\\$\quad$} &
\shortstack{\\$\quad$\\$Z_1\cong \mathbb{P}^2$,\\ $\pi_1$ is a conic bundle\\ with cubic discriminant curve.} &
\shortstack{\\$\quad$\\$Z_2$ is a smooth quadric threefold,\\ $\pi_2$ is the blow up along a smooth \\ rational quintic curve.}&
\shortstack{\\$\quad$\cite[Theorem~7.14]{JahnkePeternellRadloff2011},\\\cite[(II)]{Prokhorov16},\\
Lemma in this paper.\\ $\quad$}\\
\hline
\shortstack{\\ $\mathbf{15}$\\$\quad$\\$\quad$\\$\quad$}  &
\shortstack{\\$22$\\$\quad$\\$\quad$\\$\quad$} &  \shortstack{\\$1$\\$\quad$\\$\quad$\\$\quad$} &  \shortstack{\\$0$\\$\quad$\\$\quad$\\$\quad$} &
\shortstack{\\$\quad$\\$Z_1\cong \mathbb{P}^1$,\\ $\pi_1$ is a fibration into\\ sextic del Pezzo surfaces.} &
\shortstack{\\$\quad$\\$Z_2\cong V_5$,\\ $\pi_2$ is the blow up along\\ a rational quartic curve.}&
\shortstack{\\\cite[Proposition~6.5]{JahnkePeternellRadloff2011},\\ \cite[(III)]{Prokhorov16}.\\$\quad$}\\
\hline
\shortstack{\\ $\mathbf{16}$\\$\quad$}  &
\shortstack{\\$40$\\$\quad$} &  \shortstack{\\$2$\\$\quad$} &  \shortstack{\\$0$\\$\quad$} &
\shortstack{\\$Z_1=\mathbb{P}^1$,\\ $\pi_1$ is a quadric bundle.}&
\shortstack{\\$Z_2=\mathbb{P}^2$,\\ $\pi_2$ is a $\mathbb{P}^1$-bundle.} &
\shortstack{\\$\quad$\\\cite[Theorem~3.5]{JahnkePeternell},\\\cite[(2.3.2)]{Takeuchi2022}.}\\
\hline
\shortstack{\\ $\mathbf{17}$\\$\quad$}  &
\shortstack{\\$54$\\$\quad$} &  \shortstack{\\$3$\\$\quad$} &  \shortstack{\\$0$\\$\quad$} &
\shortstack{\\$Z_1=\mathbb{P}^1$,\\ $\pi_1$ is a $\mathbb{P}^2$-bundle.}&
\shortstack{\\$Z_2=\mathbb{P}^1$,\\ $\pi_2$ is a $\mathbb{P}^2$-bundle.}&
\shortstack{\\ Example in this paper.\\$\quad$}\\
\hline
\end{longtable}
\end{landscape}

\newpage

\end{document}